\documentclass[a4paper,12pt]{article}
\usepackage[top=2.5cm,bottom=2.5cm,left=2.5cm,right=2.5cm]{geometry}
\usepackage{cite, amsmath, amssymb}
\usepackage{amsthm,amssymb,amsmath}
\usepackage{epsfig}
\usepackage{calc,curves,ebezier,epic,eepic,graphicx,multiply,rotating}
\usepackage{color}
\usepackage{amssymb,amsmath,latexsym}
\usepackage{lscape}
\usepackage{amsthm}
\usepackage[usenames,dvipsnames]{pstricks}
\usepackage{epsfig}

\setcounter{MaxMatrixCols}{10}

\allowdisplaybreaks
\newtheorem{prelem}{{\bf Theorem}}

\newtheorem{theorem}{Theorem}
\newtheorem{corollary}[theorem]{Corollary}

\newtheorem{remark}[theorem]{Remark}

\theoremstyle{definition}

\theoremstyle{definition}
\theoremstyle{remark}

\begin{document}

\baselineskip=0.30in

\begin{center}
{\LARGE \textbf{New upper bounds for the forgotten index among bicyclic
graphs}}

\vspace{5mm}

{\large \textbf{A. Jahanbani$^{a\ast}$, L. Shahbazi$^a$, S.M. Sheikholeslami$^{a}$,\\
R. Rasi$^{a}$, and J. Rodr\'iguez$^{b}$}}

\vspace{5mm}

\baselineskip=0.20in

\emph{$^{a}$Department of Mathematics, Azarbaijan Shahid Madani University%
\\[0pt]
Tabriz, Iran} \\[0pt]
\emph{$^{b}$Departamento de Matem\'{a}ticas, Facultad de Ciencias B\'{a}sicas, Universidad de Antofagasta, Av Angamos 601, Antofagasta, Chile.%
} \\[0pt]
%


\vspace{6mm}

\end{center}

\vspace{6mm}

\baselineskip=0.23in

\noindent \textbf{Abstract }

The forgotten topological index of a graph $G$, denoted by $F(G)$, is
defined as the sum of weights $d(u)^{2}+d(v)^{2}$ over all edges $uv$ of $G$%
, where $d(u)$ denotes the degree of a vertex $u$. In this paper, we give
sharp upper bounds of the F-index (forgotten topological index) over
bicyclic graphs, in terms of the order and maximum degree.
\footnotetext[1]{%
Corresponding author.}

\footnotetext{\textit{E-mail addresses:}
\texttt{Akbar.jahanbani92@gmail.com} (A. Jahanbani),
\texttt{s.m.sheikholeslami@azaruniv.ac.ir} (S.M. Sheikholeslami),
\texttt{l.shahbazi@azaruniv.ac.ir} (L. Shahbazi),
\texttt{r.rasi@azaruniv.edu} (R. Rasi),
\texttt{jonnathan.rodriguez@uantof.cl} (J. Rodr\'iguez)
 }

{\small \textbf{Keywords:} Forgotten index, Bicyclic graph, Molecular graph, Maximum degree}

{\small \textbf{AMS subject classification: 05C50, 92E10}}

\baselineskip=0.30in

\vspace{4mm}

\section{Introduction}

A topological index is a numeric quantity associated with a molecular graph
that remains invariant under graph isomorphism and encodes at least one
physical or chemical property of the underlying organic molecule.
Topological indices play an important role in predicting the physical as
well as the chemical properties (boiling point, volatility, stability,
solubility, connectivity, chirality and melting point) of chemical
compounds. For more information we refer to \cite%
{Tod-00,Tod-09} and the references cited therein.

Let $G$ be a simple graph with vertex set $V=V(G)$ and edge set $E=E(G)$.
The integers $n=n(G)=|V(G)|$ and $m=m(G)=|E(G)|$ are the order and the size
of the graph $G$, respectively. If $m=n+1$ then we say that $G$ is a bicyclic
graph. The open neighborhood of vertex $v$ is defined as $N(v)=N_{G}(v)=\{u\in
V(G):uv\in E(G)\}$ and the degree of $v$ is $d_{G}(v)=d_{v}=|N(v)|$. The
maximum degree of a graph $G$ is denoted by $\Delta=\Delta (G)$.

Our main objective in this paper is to investigate the forgotten index, denoted by $F\left(G\right) $ for a graph $G$ and defined as
\begin{equation*}
F=F(G)=\sum_{uv\in E}[d(u)^{2}+d(v)^{2}]=\sum_{v\in V}d(v)^{3}.
\end{equation*}%
\ This topological index was named and first studied by Furtula and Gutman \cite{Fur-15}
in 2015, but it first appeared in 1972 \cite{Gut-72} within
the study of structure-dependency of the total $\pi $-electron energy. 
For recent results in the F-index of graphs we refer to \cite{Abd-17,Che-16,Elu-17,Gao-16,Jav-18,Kha-17,Mil-17}.
In this paper, we established sharp upper bounds for the F-index among bicyclic graphs, in terms of the order and the maximum degree.

\section{Upper  bounds on the  Forgotten topological index of  bicyclic graphs}

In this section, we establish new upper bounds for he  forgotten topological index of of bicyclic graphs.
Now we present some known results that will be needed in this section.

If $n$ is a positive integer, then an integer partition of $n$ is a non-increasing sequence of positive integers $y=(x_{1},x_{2},\ldots ,x_{t})$, such that $n=\sum_{i=1}^{t}x_{i}$. If $\Delta \geq x_{1}\geq x_{2}\geq \ldots \geq x_{t}\geq 1$, then $(x_{1},x_{2},\ldots ,x_{t})$ is called a $\Delta $-partition or an integer partition of $n$ on $N_{\Delta}=\{1,2,\ldots ,\Delta \}$.\newline
A $\Delta $-partition $y=(y_{1},y_{2},\ldots ,y_{t})$ of $n$ is called an integer $\Delta $-dominant sequence if the number $\Delta $ in this partition is as large as possible. In other words, if $n=t\Delta $, then $%
y=(\Delta ,\ldots ,\Delta )$ is the integer $\Delta $-dominant sequence and if $n=t\Delta +b$ where $0<b<\Delta $ then $y=(\Delta ,\ldots ,\Delta ,b)$ is the integer $\Delta $-partition.

Let $B$ be a bicyclic graph of order $n$ and maximum degree $\Delta $. For each $i\in \{1,2,\ldots ,\Delta \}$, let $n_{i}$ denote the number of
vertices of degree $i$. Then
\begin{equation}\label{1}
n_{1}+n_{2}+\cdots +n_{\Delta }=n
\end{equation}%
and
\begin{equation}\label{2}
n_{1}+2n_{2}+\ldots +\Delta n_{\Delta }=2m=2n.
\end{equation}%
Subtracting (\ref{1}) from (\ref{2}), yields
\begin{equation}\label{3}
n_{2}+2n_{3}+\ldots +(\Delta -1)n_{\Delta }=n.
\end{equation}
By (\ref{3}), we obtain the $(\Delta -1)$-partition of $n$ as follows:
\begin{equation}\label{4}
(\ \underbrace{\Delta -1,\ldots ,\Delta -1}_{n_{\Delta }}\ ,\ldots ,\
\underbrace{2,\ldots ,2}_{n_{3}}\ ,\ \underbrace{1,\ldots ,1}_{n_{2}}\ ).
\end{equation}
Next result is an immediate consequence of the above discussion.

\begin{corollary}\label{cor1}
For any bicyclic graph $B$ of order $n$ with maximum
degree $\Delta $, the F-index $F(B)=\sum_{v\in V}d_{v}^{3}$ is maximum if
and only if the $(\Delta -1)$-partition (\ref{4}) is a $(\Delta -1)$%
-dominant sequence of $n$.
\end{corollary}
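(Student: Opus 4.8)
The plan is to convert the statement into a purely combinatorial optimization over integer partitions and then exploit convexity. First I would rewrite $F(B)$ through the partition (\ref{4}). Since exactly $n_i$ vertices have degree $i$, we have $F(B)=\sum_{i=1}^{\Delta}i^{3}n_i = n_1+\sum_{i=2}^{\Delta}i^{3}n_i$; denoting by $x_1,\dots,x_t$ the parts of (\ref{4}) (so a part $x_j$ records a vertex of degree $x_j+1$, and $t=n_2+\cdots+n_\Delta$), equation (\ref{1}) gives $n_1=n-t$ and hence
\begin{equation*}
F(B) = (n-t) + \sum_{j=1}^{t}(x_j+1)^{3} = n + \sum_{j=1}^{t} g(x_j), \qquad g(x):=(x+1)^{3}-1=x^{3}+3x^{2}+3x.
\end{equation*}
By (\ref{3}) the parts satisfy $\sum_{j}x_j=n$, a quantity that does not vary over bicyclic graphs of order $n$. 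So maximizing $F(B)$ is the same as maximizing $\sum_{j}g(x_j)$ over all partitions of this fixed integer into parts lying in $\{1,\dots,\Delta-1\}$, and the claim becomes: that maximum is attained exactly at the $(\Delta-1)$-dominant partition.

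The engine is that $g$ is strictly convex and strictly superadditive on the positive integers: a one-line computation gives $g(a+b)-g(a)-g(b)=3ab(a+b+2)>0$ for $a,b\ge 1$. If a partition is not the $(\Delta-1)$-dominant sequence, then it has at least two parts $a\le b$ with $b\le\Delta-2$, and I would improve it by a local exchange. If $a+b\le\Delta-1$, replace $a,b$ by the single part $a+b$; by superadditivity $\sum_j g(x_j)$ strictly increases and the number of parts drops. If $a+b\ge\Delta$, replace $\{a,b\}$ by $\{\Delta-1,\;a+b-\Delta+1\}$ — a legal pair of parts since $1\le a+b-\Delta+1\le a\le b\le\Delta-1$ — which is a sum-preserving majorization transfer and so, by strict convexity, again strictly increases $\sum_j g(x_j)$. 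Each move strictly increases the objective, and the potential $2\cdot(\#\text{parts})-(\#\text{parts equal to }\Delta-1)$ strictly decreases and stays $\ge 1$, so the procedure terminates — necessarily at a partition having at most one part below $\Delta-1$, i.e.\ at the $(\Delta-1)$-dominant sequence. This shows in one stroke that the dominant sequence is the unique maximizer of $\sum_j g(x_j)$, which gives both directions of the corollary.

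The point that requires care is the interface between this partition calculus and the graph-theoretic phrasing. The identity above already shows $F(B)\le n+\sum_j g(y_j)$, where $(y_1,\dots,y_s)$ is the $(\Delta-1)$-dominant sequence, with equality precisely when $B$'s partition (\ref{4}) is dominant; this is the whole "if" direction and the inequality half of "only if". To complete the "only if" direction one must know that the dominant sequence — or at least some partition strictly closer to it in the ordering above — is actually realized by a bicyclic graph of order $n$ with maximum degree $\Delta$; here I would either invoke the explicit extremal constructions built for the main theorems, or observe that each exchange move above lifts to a rewiring of $B$ (redistributing incidences among low-degree vertices) that preserves bicyclicity, the order $n$, and the bound $\Delta$. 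So the genuinely delicate step is not the convexity estimate, which is routine, but checking that these degree-sequence manipulations can be carried out at the level of graphs without breaking the bicyclic structure.
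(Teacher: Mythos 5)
Your argument is correct in its combinatorial core, and it is genuinely different from what the paper does: the paper offers no proof at all, declaring the corollary ``an immediate consequence of the above discussion,'' where that discussion only establishes the reformulation of the degree sequence as the partition (\ref{4}) --- it never explains why concentrating parts at $\Delta-1$ maximizes $\sum_{v}d_v^{3}$. Your identity $F(B)=n+\sum_j g(x_j)$ with $g(x)=(x+1)^{3}-1$, the superadditivity computation $g(a+b)-g(a)-g(b)=3ab(a+b+2)>0$, and the two exchange moves (merge the parts $a,b$ when $a+b\le\Delta-1$, otherwise transfer to the pair $\{\Delta-1,\,a+b-\Delta+1\}$ and invoke strict convexity) together supply exactly the optimization step the paper relies on silently; your termination potential is sound, and the observation that any non-dominant partition has two parts $\le\Delta-2$ is correct. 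This is the substantive content that legitimizes every ``which is the optimal solution'' assertion in the paper's three main theorems, so your route buys an actual proof where the paper has only a claim.

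Two caveats. First, the realizability issue you flag at the end is real, and it is not resolved by the paper either: no bicyclic graph attaining the dominant sequence is ever constructed, so the ``only if'' direction (and the sharpness of the resulting bounds) remains open in both treatments. Your partition argument does deliver the inequality $F(B)\le n+\sum_j g(y_j)$ with $y$ the dominant sequence, which is all the subsequent theorems actually use, so nothing downstream is endangered. Second, you have inherited an arithmetic slip from the paper: for a bicyclic graph $2m=2n+2$, so (\ref{2}) should read $2n+2$ and the parts of (\ref{4}) sum to $n+2$, not $n$ (the paper itself uses $n+2$ inside the proofs of its theorems). This only shifts the constant in your constraint $\sum_j x_j=n$ and changes nothing structural in the exchange argument, but it should be corrected for consistency with the later computations.
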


\begin{remark}
In other words, with regard to the $(\Delta -1)$-dominant sequence of $n$, $%
n_{\Delta }$(number of vertices with degree $\Delta -1$) must be maximum. In
this case, sequence $(n_{1},n_{2},\ldots ,n_{\Delta })$ is called a major sequence for $B$.
\end{remark}

\begin{theorem}
{Let $B$ be a bicyclic graph of order n and maximum degree $ \bigtriangleup $ with $ n\equiv 0 $ $ mod (\bigtriangleup-1 ) $. Then
$$F(B)\le({\Delta^2}+\Delta+2)n+26$$
}
\end{theorem}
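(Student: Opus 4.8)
The plan is to use Corollary~\ref{cor1}, which reduces the problem to evaluating $F=\sum_{v\in V}d_v^{3}$ on the degree sequence whose associated $(\Delta-1)$-partition is $(\Delta-1)$-dominant. First I would record the handshake constraints for a bicyclic graph $B$: since $m=n+1$, we have $\sum_i n_i=n$ and $\sum_i i\,n_i=2m=2n+2$, whence
\[
n_2+2n_3+\cdots+(\Delta-1)n_\Delta=n+2 .
\]
Thus it is the partition of $n+2$ (not of $n$) into parts from $\{1,\dots,\Delta-1\}$ that governs the extremal configuration; note also that $\Delta\ge 3$ always, since degrees summing to $2n+2$ cannot all be $\le 2$.

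Next I would pin down the extremal sequence under the hypothesis $n\equiv 0\pmod{\Delta-1}$. Writing $n=(\Delta-1)q$, we get $n+2=(\Delta-1)q+2$, so for $\Delta\ge 4$ the largest possible value of $n_\Delta$ (the number of parts equal to $\Delta-1$) is $q=n/(\Delta-1)$, leaving a residual ``excess degree'' of $2$ to be distributed among the remaining $n-n_\Delta$ vertices; for $\Delta=3$ the residual is $0$ and $n_\Delta=n_3=(n+2)/2$. For $\Delta\ge 4$, among the $(\Delta-1)$-dominant completions the forgotten index is largest when this residual $2$ is carried by a \emph{single} vertex of degree $3$ rather than by two vertices of degree $2$: the first choice contributes $3^{3}+(n-n_\Delta-1)$ to $F$ and the second $2\cdot 2^{3}+(n-n_\Delta-2)$, and $27-1>16-2$. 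Hence the extremal sequence is $n_\Delta=n/(\Delta-1)$, $n_3=1$, $n_1=n-n/(\Delta-1)-1$, and $n_i=0$ otherwise.

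It then remains only to compute. Substituting,
\[
F(B)\le n_\Delta\Delta^{3}+3^{3}+n_1=\frac{n}{\Delta-1}\,\Delta^{3}+27+\Big(n-\frac{n}{\Delta-1}-1\Big)=\frac{n(\Delta^{3}-1)}{\Delta-1}+n+26 ,
\]
and the factorization $\Delta^{3}-1=(\Delta-1)(\Delta^{2}+\Delta+1)$ collapses this to $n(\Delta^{2}+\Delta+1)+n+26=(\Delta^{2}+\Delta+2)n+26$. The case $\Delta=3$ gives the same value directly: $27\cdot\frac{n+2}{2}+\frac{n-2}{2}=14n+26=(\Delta^{2}+\Delta+2)n+26$.

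The only non-mechanical step is locating the extremal degree sequence. Two points must be handled with care: (i) one must use $2m=2n+2$ for bicyclic graphs, so that the controlling partition is that of $n+2$ — this is precisely where the stray ``$+2$'', hence the term ``$26=3^{3}-1$'', enters; and (ii) one must justify the tie-break, namely that once $n_\Delta$ is fixed at its maximum a single degree-$3$ vertex beats two degree-$2$ vertices, since Corollary~\ref{cor1} only pins down $n_\Delta$ and not the rest of the sequence. Verifying that this sequence is actually realized by some bicyclic graph (so that the bound is attained, e.g. $F=224$ for $\Delta=4$, $n=9$) is routine for $n$ not too small — attach pendant vertices appropriately to a small bicyclic core such as $K_4$ minus an edge — but is not needed for the stated inequality.
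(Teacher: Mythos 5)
Your proof is correct, and it reaches the bound by a genuinely more economical route than the paper. The paper's proof introduces a deficiency parameter $r$ measuring how far $n_\Delta$ falls short of its maximum possible value and then grinds through five cases ($r=-1$, $r=0$, $r=1$, $2\le r<\Delta-3$, $\Delta-3\le r\le k-1$), in each case invoking Corollary~\ref{cor1} to pick an ``optimal'' sequence and verifying the bound separately; its Case~1 ($r=0$) is literally your computation, with $(n_1,n_3,n_\Delta)=((\Delta-2)k-1,1,k)$ giving $(\Delta^3+\Delta-2)k+26=(\Delta^2+\Delta+2)n+26$. You instead argue once that the global maximizer of $\sum_i i^3 n_i$ subject to $\sum_i n_i=n$ and $\sum_i(i-1)n_i=n+2$ is the dominant sequence with the residual budget of $2$ placed on a single degree-$3$ vertex, and you supply the tie-break ($27-1>16-2$) that the paper leaves implicit. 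What your approach buys is brevity and two genuine corrections: (i) you derive the controlling identity $n_2+2n_3+\cdots+(\Delta-1)n_\Delta=n+2$ from $2m=2n+2$, whereas the paper's displayed equations (2)--(3) wrongly state $2m=2n$ (the unicyclic relation) even though its proof silently uses $n+2$; and (ii) you handle $\Delta=3$ correctly, where the extremal sequence is $n_3=(n+2)/2$, $n_2=0$, $n_1=(n-2)/2$ --- this corresponds to $r=-1$ in the paper's parametrization, a case the paper dismisses as impossible, so the paper never actually evaluates the $\Delta=3$ extremal configuration (which attains equality $14n+26$). What the paper's longer case analysis buys is robustness: it verifies the bound for every realizable degree sequence rather than relying on the unproved Corollary~\ref{cor1} to locate the maximum, whereas your argument inherits whatever gap that corollary has. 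Since you justify the greedy step directly (the marginal gain per unit of ``excess degree'' at degree $i$ is $i^2+i+1$, increasing in $i$), this reliance is harmless, and your proof stands.
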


\begin{proof}
Without loss of generality, assume that $ n=(\bigtriangleup-1)k $.\\ By equality in (\ref{4}), we have
$${n_\Delta}=\frac{{n+2-({n_2}+2{n_3}+\ldots +(\Delta - 2){n_{\Delta- 1}})}}{{\Delta- 1}}=k-r$$\\
where $$r= \dfrac{n_{2}+2n_{3}+\cdot\cdot\cdot+(\bigtriangleup-2)n_{\bigtriangleup-1}-2}{\bigtriangleup-1}.$$
then $ -1\leq r \leq k-1 $ and $ 1\leq n_{\bigtriangleup} \leq k.$ \\
Thus, consider the following cases.\\
\textbf{Case 0.}  $r=-1$.\\
Then, clearly $ n_{\bigtriangleup}=k+1 $. It follows that
$${n_2}+2{n_3}+\ldots+(\Delta-2){n_{\Delta-1}}+(\Delta-1)(k+1)=(\Delta-1)k+2$$ and so
$${n_2}+2{n_3}+\ldots+(\Delta-2){n_{\Delta-1}}=-(\Delta-1)+2$$ that it is not possible. so $ r=0 $, $ \bigtriangleup>3 .$

\noindent
{\bf Case 1.} $r=0.$\\
Thus, $n_{\bigtriangleup}=k$, $n_{3}=1$, $n_{2}=\ldots =n_{\bigtriangleup-1}=0$. Since $ n_{1}+n_{2}+\ldots +n_{\bigtriangleup}=n$, we conclude that $n_{1}=(\bigtriangleup-2)k-1$. By Corollary \ref{cor1}, we obtain
$$({n_1},{n_2},{n_3},\ldots ,{n_{\Delta  - 1}},{n_\Delta })=((\Delta- 2)k-1,0,1,0, \ldots ,0,k)$$
which is the optimal solution and so $F(B)$ is maximum. Therefore,
\begin{align*}
F(B) &\le {F_{\max }}({B_{n,\Delta }}) = {n_1} + {2^3}{n_2} +  \ldots  + {(\Delta  - 1)^3}{n_{\Delta  - 1}} + {\Delta ^3}{n_\Delta } \\
& = (\Delta  - 2)k - 1 + 3^{3} + {\Delta ^3}(k ) \\
 & = ({\Delta ^3} + \Delta  - 2)k +26\\
 & = ({\Delta ^2} + \Delta  + 2)(\Delta  - 1)k+26 \\
 & = ({\Delta ^2} + \Delta  + 2)n +26.
\end{align*}
 \noindent{\bf Case 2.} $ r=1. $\\
 Since  $ n_{\bigtriangleup}=k-1 $, it follows from (\ref{4}) that  $$n_{2}+2n_{3}+\cdots+(\bigtriangleup-2)n_{\bigtriangleup-1}=(\bigtriangleup-1)+2=(\bigtriangleup-2)+3.$$
First let $ \bigtriangleup>4 $, so,
$n_{4}=1 $, $ n_{\bigtriangleup-1}=1 $, $ n_{2}= \ldots =n_{\bigtriangleup-2}=0 $. Since $ n_{1}+n_{2}+ \ldots +n_{\bigtriangleup}=n $, we conclude that $ n_{1}=(\bigtriangleup-2)k-1 $. By Corollary \ref{cor1},
$$({n_1},{n_2},{n_3},{n_4}\ldots ,{n_{\Delta  - 1}},{n_\Delta }) = ((\Delta  - 2)k - 1,0,0,1,0, \ldots ,0,k-1)$$
which is the optimal solution and so $F(B)$ is maximum. Therefore,
\begin{align*}
F(B) \le {F_{\max }}({B_{n,\Delta }})& = {n_1} + {2^3}{n_2} +  \ldots  + {(\Delta  - 1)^3}{n_{\Delta  - 1}} + {\Delta ^3}{n_\Delta } \\
&= (\Delta  - 2)k - 1 + 4^{3} +(\bigtriangleup-1)^{3}+ {\Delta ^3}(k-1 ) \\
  &= ({\Delta ^3} + \Delta  - 2)k +63-3\bigtriangleup^{2}+3\bigtriangleup-1\\
  &= ({\Delta ^3} + \Delta  - 2)k-3\bigtriangleup^{2}+3\bigtriangleup+62\\
 & = ({\Delta ^2} + \Delta  +2)(\bigtriangleup-1)k-3\bigtriangleup(\bigtriangleup-1)+62\\
 &\leq ({\Delta ^2} + \Delta  +2)n-15(\bigtriangleup-1)+62 \\
 &= ({\Delta ^2} + \Delta  +2)n-15\bigtriangleup+77\\
 &\leq ({\Delta ^2} + \Delta  +2)n+2\\
 &< ({\Delta ^2} + \Delta  +2)n+26.
\end{align*}
 Now, if let $ \bigtriangleup=4 $, by Corollary \ref{cor1}
\begin{center}
  $ (n_{1},n_{2},n_{3},n_{4})=(2k-2,1,2,k-1) $
\end{center}
which is the optimal solution. Therefore,
\begin{align*}
F(B) \le {F_{\max }}({B_{n,\Delta }}) &= {n_1} + {2^3}{n_2} + {3^3}{n_3}+{4^3}{n_1} \\
& = (2k - 2) +8+3^{3}(2)+4^{3}(k-1) \\
 &= (4^{3}+4- 2)k -4 \\
 &= (\bigtriangleup^{3}+\bigtriangleup- 2)k -4 \\
 &= (\bigtriangleup^{2}+\bigtriangleup+ 2)(\bigtriangleup-1)k -4 \\
 &= (\bigtriangleup^{2}+\bigtriangleup+2)n-4\\
&< (\bigtriangleup^{2}+\bigtriangleup+2)n+26.
\end{align*}

 Also, if $ \bigtriangleup=3 $, by Corollary \ref{cor1}
\begin{center}
  $ (n_{1},n_{2},n_{3})=(k-3,4,k-1) $
\end{center}
which is the optimal solution. Thus,
\begin{align*}
F(B) \le {F_{\max }}({B_{n,\Delta }}) &= {n_1} + {2^3}{n_2} + {3^3}{n_3} \\
  &= (k - 3) +8(4)+3^{3}(k-1) \\
 &= (3^{3}+3- 2)k +2 \\
 &= (\bigtriangleup^{3}+\bigtriangleup- 2)k +2 \\
&= (\bigtriangleup^{2}+\bigtriangleup+2)(\bigtriangleup-1)k +2 \\
&= (\bigtriangleup^{2}+\bigtriangleup+ 2)n+2\\
 &< (\bigtriangleup^{2}+\bigtriangleup+2)n+26.
\end{align*}
\noindent {\bf Case 3.} $ 2 \leq r<\bigtriangleup-3.$\\
As above, $$ n_{2}+2n_{3}+ ... +(\bigtriangleup-2)n_{\bigtriangleup-1}=(\bigtriangleup-1)r+2=(\bigtriangleup-2)r+r+2.$$
 since $ r+2<\bigtriangleup-1 $, it follows from Corollary \ref{cor1} that
$$({n_1},{n_2}, \ldots ,n_{r+3},\ldots,{n_{\Delta  - 2}},{n_{\Delta  - 1}},{n_\Delta }) = ((\Delta  - 2)k - 1,0, \ldots ,1,\ldots,0,r,k - r)$$
which is the optimal solution. Thus
\begin{align*}
F(B) \le {F_{\max }}({B_{n,\Delta }}) &= {n_1} + {2^3}{n_2} +  \ldots  + {(\Delta  - 1)^3}{n_{\Delta  - 1}} + {\Delta ^3}{n_\Delta } \\
&= (\Delta  - 2)k - 1 + {(r + 3)^3} + {(\Delta  - 1)^3}r + {\Delta ^3}(k - r) \\
& = ({\Delta ^3} + \Delta- 2)k - 1 + {(r+3)^3} - 3{\Delta ^2}r + 3\Delta r - r \\
 & = ({\Delta ^2} + \Delta  + 2)n + {(r + 3)^3} - 3{\Delta ^2}r + 3\Delta r-r-1 \\
 & = ({\Delta ^2} + \Delta  + 2)n + (r+3)^{3} +r(- 3{\Delta ^2} + 3\Delta-1)-1 \\
&< ({\Delta ^2} + \Delta  + 2)n + \bigtriangleup^{3}+(\bigtriangleup-3)(- 3{\Delta ^2} + 3\Delta -1)-1 \\
& = ({\Delta ^2} + \Delta  + 2)n-2\bigtriangleup^{3}+12\bigtriangleup^{2}-10\bigtriangleup+2\\
 & = ({\Delta ^2} + \Delta  + 2)n-\bigtriangleup^{2}(2\bigtriangleup-12)-10\bigtriangleup+2\\
 & < ({\Delta ^2} + \Delta  + 2)n-25(2\bigtriangleup-12)-50+2\\
 & = ({\Delta ^2} + \Delta  + 2)n-50\bigtriangleup+252\\
& < ({\Delta ^2} + \Delta  + 2)n+2\\
 & < ({\Delta ^2} + \Delta  + 2)n+26.
\end{align*}
Because $5<\bigtriangleup.$\\
\noindent{\bf Case 4.} $ \bigtriangleup-3 \leq r\leq k-1.$
Then $${n_2} + 2{n_3} +  \ldots  + (\Delta  - 2){n_{\Delta  - 1}} = (\Delta  - 2)r + r+2.$$
Thereby, there are non-negative integers t, s such that $ r+2=t(\bigtriangleup-2)+s $ with $ 0\leq s<\bigtriangleup-2 $. Hence \\
$$ n_{2}+2n_{3}+ \ldots+(\bigtriangleup-2)n_{\bigtriangleup-1}=(\bigtriangleup-2)(r+t)+s. $$
If $ 0< s< \bigtriangleup-2 $, then\[({n_1},{n_2}, \ldots ,{n_s},{n_{s + 1}},{n_{s + 2}}, \ldots ,{n_{\Delta  - 2}},{n_{\Delta  - 1}},{n_\Delta }) = ((\Delta  - 2)k - (t + 1),0, \ldots ,0,1,0, \ldots ,0,0,t + r,k - r)\]
which is optimal solution and since $ s <\bigtriangleup-2 $ and $ -r \leq3-\bigtriangleup $ and $ \bigtriangleup > 5 $, we obtain
\begin{align*}
F(B) &\le {n_1} + {2^3}{n_2} +  \ldots  + {(\Delta  - 1)^3}{n_{\Delta  - 1}} + {\Delta ^3}{n_\Delta } \\
& = (\Delta  - 2)k - (t + 1) + {(s + 1)^3} + {(\Delta  - 1)^3}(t + r) + {\Delta ^3}(k - r) \\
& = ({\Delta ^3} + \Delta  - 2)k - t - 1 + {(s + 1)^3} + {\Delta ^3}t - 3{\Delta ^2}t + 3\Delta t - t - 3{\Delta ^2}r + 3\Delta r - r \\
 & = ({\Delta ^3} + \Delta  - 2)k + {(s + 1)^3} - 1 - t( - {\Delta ^3} + 3{\Delta ^2} - 3\Delta  + 2) - r(3{\Delta ^2} + 3\Delta  + 1) \\
 & < ({\Delta ^3} + \Delta  - 2)k + {(\Delta  - 1)^3} - 1 - 1( - {\Delta ^3} + 3{\Delta ^2} - 3\Delta  + 2) + (3 - \Delta )(3{\Delta ^2} + 3\Delta  + 1) \\
 &= ({\Delta ^2} + \Delta  + 2)n - {\Delta ^3} + 6{\Delta ^2} - 4\Delta  - 1 \\
&= ({\Delta ^2} + \Delta  + 2)n - {\Delta ^2}(\Delta  - 6) - 4\Delta  - 1 \\
 & < ({\Delta ^2} + \Delta  + 2)n - 25(\Delta  - 6) - 21 \\
& = ({\Delta ^2} + \Delta  + 2)n - 25\Delta  + 129 \\
 & < ({\Delta ^2} + \Delta  + 2)n + 4 \\
 & < ({\Delta ^2} + \Delta  + 2)n + 26.
\end{align*}
  If $ s=0 $, then the optimal solution is \[({n_1},{n_2}, \ldots ,{n_{\Delta  - 2}},{n_{\Delta  - 1}},{n_\Delta }) = ((\Delta  - 2)k - t,0, \ldots ,0,r + t,k - r).\]
Since $ -r < (4-\bigtriangleup) $, $ \bigtriangleup>3 $, we conclude that
\begin{align*}
F(B) &\le {n_1} + {2^3}{n_2} +  \ldots  + {(\Delta  - 1)^3}{n_{\Delta  - 1}} + {\Delta ^3}{n_\Delta } \\
& = (\Delta -2)k-t+{\Delta ^3}t-3{\Delta^2}t+3\Delta t-t-3{\Delta ^2}r + 3\Delta r-r\\
 & = ({\Delta ^3} + \Delta  - 2)k - t(-{\Delta ^3}+3{\Delta ^2}-3\Delta+2)-r(3{\Delta ^2}-3\Delta+ 1)\\
 &< ({\Delta ^3} + \Delta  - 2)k - 1(-{\Delta^3}+3{\Delta ^2}-3\Delta+2)+(4-\Delta)(3{\Delta^2}-3\Delta+ 1)\\
 & = ({\Delta ^2} + \Delta  + 2)n - 2{\Delta ^3} + 12{\Delta ^2} - 10\Delta + 2 \\
 & = ({\Delta ^2} + \Delta  + 2)n - 2{\Delta ^2}(\Delta  - 6) - 10\Delta  + 2 \\
 & < ({\Delta ^2} + \Delta  + 2)n - 50(\Delta  - 6) - 48 \\
 & = ({\Delta ^2} + \Delta  + 2)n - 50\Delta  + 252 \\
& < ({\Delta ^2} + \Delta  + 2)n - 2 \\
 & < ({\Delta ^2} + \Delta  + 2)n + 26.
\end{align*}
\end{proof}

\begin{theorem}
{Let $B$ be a bicyclic graph of order $n$ and maximum degree $ \bigtriangleup $ with $ n\equiv 1 $ $ mod( \bigtriangleup-1 ) $. Then \\
$$ F(B)\leq(\bigtriangleup^{2}+\bigtriangleup+2)n-(\bigtriangleup^{2}+\bigtriangleup-6). $$
}
\end{theorem}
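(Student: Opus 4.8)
The plan is to follow the argument of the previous theorem almost verbatim. Write $n=(\Delta-1)k+1$ with $k\ge 1$ (and note $\Delta\ge 3$, since a bicyclic graph has a vertex of degree at least $3$). By the equality case of the $(\Delta-1)$-partition (\ref{4}),
\[
(\Delta-1)\,n_\Delta=n-\bigl(n_2+2n_3+\cdots+(\Delta-2)n_{\Delta-1}\bigr),
\]
so, putting $r:=\dfrac{n_2+2n_3+\cdots+(\Delta-2)n_{\Delta-1}-1}{\Delta-1}$, we may write $n_\Delta=k-r$, where $r$ is an integer; since $n_2+2n_3+\cdots+(\Delta-2)n_{\Delta-1}\ge 0$ and $n_\Delta\ge 1$, we get $0\le r\le k-1$ (so there is no analogue of Case~$0$ here). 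By Corollary~\ref{cor1}, for each admissible $r$ the $F$-index is maximised by the unique $(\Delta-1)$-dominant degree sequence compatible with $n_\Delta=k-r$; thus it suffices to write that sequence down, evaluate $F$, and bound it. I would split into the cases $r=0$, $1\le r<\Delta-3$, and $\Delta-3\le r\le k-1$ (mirroring Cases~1--4 above), together with a separate treatment of $\Delta\in\{3,4\}$.

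In the decisive case $r=0$ the residual mass $n_2+2n_3+\cdots+(\Delta-2)n_{\Delta-1}$ equals $1$, which forces $n_2=1$ and $n_3=\cdots=n_{\Delta-1}=0$; together with $n_1=n-n_2-n_\Delta=(\Delta-2)k$ this gives the dominant sequence $((\Delta-2)k,\,1,\,0,\ldots,0,\,k)$. Hence
\[
F(B)\le (\Delta-2)k+2^3+\Delta^3 k=(\Delta^3+\Delta-2)k+8=(\Delta^2+\Delta+2)(\Delta-1)k+8,
\]
and since $(\Delta-1)k=n-1$ the right-hand side equals $(\Delta^2+\Delta+2)(n-1)+8=(\Delta^2+\Delta+2)n-(\Delta^2+\Delta-6)$, i.e.\ exactly the claimed bound, attained by this sequence.

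For $1\le r<\Delta-3$ (so $\Delta\ge 5$) the dominant sequence is $n_\Delta=k-r$, $n_{\Delta-1}=r$, $n_{r+2}=1$, $n_1=(\Delta-2)k$, and all other $n_i=0$ (the residual mass $(\Delta-1)r+1=(\Delta-2)r+(r+1)$ is packed greedily, and $r+1<\Delta-2$ here). A routine computation gives $F(B)\le (\Delta^2+\Delta+2)(n-1)+g(r)$ with $g(r)=(r+2)^3-r(3\Delta^2-3\Delta+1)$; since $g'(r)=3(r+2)^2-(3\Delta^2-3\Delta+1)<0$ on $0\le r\le\Delta-3$ (because $(r+2)^2\le(\Delta-1)^2$ and $3(\Delta-1)^2<3\Delta^2-3\Delta+1$), the function $g$ is strictly decreasing, so $g(r)<g(0)=8$ and the inequality is strict. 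For $\Delta-3\le r\le k-1$ one proceeds as in Case~4 of the previous proof: write the residual mass $(\Delta-1)r+1=(\Delta-2)(r+t)+s$ with $t\ge1$ and $0\le s<\Delta-2$, read off the corresponding dominant sequence, and estimate; the bound then holds with a wide margin. Finally $\Delta=3$ follows from the closed form $F(B)\le(\Delta^3+\Delta-2)k-12r+8$, maximal at $r=0$; and $\Delta=4$ from the fact that $n_4=k,\ n_2=1$ is the only dominant sequence, again giving exactly the stated bound.

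The routine parts are the per-case arithmetic and the monotonicity of $g$. The main obstacle I anticipate is the range $\Delta-3\le r\le k-1$: there the residual partition no longer reduces to a single spike together with $r$ copies of $\Delta-1$, so one must split off a variable number $t$ of near-maximal parts and control a two-parameter family $F(B)\le(\Delta^2+\Delta+2)(n-1)+\varphi(r,t)$, bounding the cubic-type expression $\varphi$ over its admissible region using $\Delta\ge 5$ (or $\Delta\ge 6$ where needed), exactly as in the previous theorem. A secondary nuisance is $\Delta\in\{3,4\}$, where the generic spike position $r+2$ collides with $\Delta-1$ or $\Delta$ and the generic sequence must be replaced by an ad hoc one.
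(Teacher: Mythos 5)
Your overall strategy is the same as the paper's: reduce to maximizing $\sum_i i^3 n_i$ over $(\Delta-1)$-dominant partitions and split into cases according to $r$. However, there is a genuine error at the foundation of your version. For a bicyclic graph $m=n+1$, so $\sum_i i\,n_i=2m=2n+2$ and hence $n_2+2n_3+\cdots+(\Delta-1)n_\Delta=n+2$, not $n$. (The displayed equations (2)--(3) of the paper contain a typo at exactly this point; the proofs themselves use $n+2$ --- note the ``$n+2$'' in the formula for $n_\Delta$ in the first theorem, and the ``$-3$'' in the numerator of $r$ in the present one, coming from $n+2=(\Delta-1)k+3$.) You instead take the printed equation at face value and define $r$ with numerator ``$\cdots-1$''. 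This shifts everything by $2$: in your decisive case $r=0$ the residual mass is actually $3$, not $1$, so the forced configuration is not $n_2=1$; your claimed extremal sequence $((\Delta-2)k,1,0,\ldots,0,k)$ has degree sum $2n$ and is therefore not the degree sequence of any bicyclic graph (it is a unicyclic one, so the equality you assert cannot be attained); the excluded value $r=-1$, which the paper must rule out as its Case~0, is dismissed for the wrong reason; and the spike positions and thresholds in the later cases are all displaced ($r+4$ rather than $r+2$, $\Delta-4$ rather than $\Delta-3$, etc.).

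The discrepancy is not cosmetic. With the correct residual $3$, the unconstrained degree-sequence optimum at $r=0$ places a single vertex at degree $4$, contributing $4^3=64$ and giving $(\Delta^2+\Delta+2)(n-1)+64$ rather than your $(\Delta^2+\Delta+2)(n-1)+8$; your final constant coincides with the paper's only because the paper, too, evaluates its degree-$4$ spike as $2^3=8$. So before any of the case analysis can be trusted you must first correct the handshake identity and then re-derive the extremal partitions and the resulting constants; as written, your argument computes the answer for the wrong class of graphs.
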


\begin{proof}
Let $ n=(\bigtriangleup-1)k+1 $. Set $$r=\dfrac{n_{2}+2n_{3}+ ... +(\bigtriangleup-2)n_{\bigtriangleup-1}-3}{\bigtriangleup-1}.$$
By equality in (\textbf{4}), we have $${n_\Delta } = k - (\frac{{{n_2} + 2{n_3} +  \ldots  + (\Delta  - 2){n_{\Delta  - 1}} - 3}}{{\Delta  - 1}}) = k - r.$$\\
Then clearly $ -1\leq r \leq k-1 $ and $ 1\leq n_{\bigtriangleup} \leq k.$ We consider the following cases:\\
\textbf{Case 0.} $r=-1 $.\\
Then clearly $n_{\bigtriangleup}=k+1$. It follow that
$${n_2} + 2{n_3} +  \ldots  + (\Delta  - 2){n_{\Delta  - 1}} + (\Delta  - 1)(k + 1) = (\Delta  - 1)k+3$$ and so
$${n_2} + 2{n_3} +  \ldots  + (\Delta  - 2){n_{\Delta  - 1}}  = -(\Delta  -1) + 3$$ that it is not possible. so $ r=0 $, $ \bigtriangleup>3.$\\
\noindent {\bf Case 1.} $ r=0 $.\\
Since $$r=\dfrac{n_{2}+2n_{3}+ ... +(\bigtriangleup-2)n_{\bigtriangleup-1}-3}{\bigtriangleup-1}=0,$$ it follows that $ n_{4}=1 $, $ n_{2}= ... =n_{\bigtriangleup-1}=0$ and $ n_{\bigtriangleup}=k $. From (2), we have $ n_{1}=(\bigtriangleup-2)k $. Now, by Corollary \ref{cor1}, we have
$$({n_1},{n_2},{n_3},{n_4} \ldots ,{n_{\Delta  - 2}},{n_{\Delta  - 1}},{n_\Delta }) = ((\Delta  - 2)k ,0,0,1,0 \ldots ,0,k )$$ which is the optimal solution. Thus
\begin{align*}
F(B) &\le {F_{\max }}({B_{n,\Delta }}) = {n_1} + {2^3}{n_2} +  \ldots  + {(\Delta  - 1)^3}{n_{\Delta  - 1}} + {\Delta ^3}{n_\Delta } \\
& = (\Delta  - 2)k + 8 + {\Delta ^3}(k) \\
&= ({\Delta ^3} + \Delta  - 2)k + 8 \\
 & = ({\Delta ^2} + \Delta  + 2)(\Delta  - 1)k + 8 \\
 & = ({\Delta ^2} + \Delta  + 2)(n - 1) + 8 \\
 & = ({\Delta ^2} + \Delta  + 2)n - ({\Delta ^2} + \Delta  - 6).
\end{align*}
 \noindent {\bf Case 2.} $ r=1 $\\
 Since $n_{\bigtriangleup}=k-1 $, it follows from (\ref{4}) that \[{n_2} + 2{n_3} +  \ldots  + (\Delta  - 2){n_{\Delta  - 1}} = (\Delta-1)  = (\Delta  - 2) + 4.\]
 First let $ \bigtriangleup>5 $. By Corollary \ref{cor1},
 $$({n_1},{n_2},{n_3},{n_4}, \ldots ,{n_{\Delta  - 2}},{n_{\Delta  - 1}},{n_\Delta }) = ((\Delta  - 2)k-1,0,2,0, \ldots ,0,1,k - 1)$$
which is the optimal solution. Thus
\begin{align*}
F(B) &\le {F_{\max }}({B_{n,\Delta }}) = {n_1} + {2^3}{n_2} +  \ldots  + {(\Delta  - 1)^3}{n_{\Delta  - 1}} + {\Delta ^3}{n_\Delta } \\
 &= (\Delta  - 2)k + 53 + {(\Delta  + 1)^3} + {\Delta ^3}(k - 1) \\
 & = ({\Delta ^3} + \Delta  - 2)k + 52 - 3{\Delta ^2} + 3\Delta  \\
&= ({\Delta ^2} + \Delta  + 2)(n - 1) + 52 - 3{\Delta ^2} + 3\Delta  \\
& = ({\Delta ^2} + \Delta  + 2)n - 4{\Delta ^2} + 2\Delta  + 50 \\
 & = ({\Delta ^2} + \Delta  + 2)n - 4({\Delta ^2} + \Delta  - 6) + 6\Delta+26  \\
 &< ({\Delta ^2} + \Delta  + 2)n - ({\Delta ^2} + \Delta  - 6).
\end{align*}

Now, if let $ \bigtriangleup=5 $, by Corollary \ref{cor1},
$$({n_1},{n_2},{n_3},{n_4},{n_5}) = (3k-1,0,2,1,k - 1)$$
which is the optimal solution. Thus
\begin{align*}
F(B) &\le {F_{\max }}({B_{n,\Delta }}) = {n_1} + {2^3}{n_2} + {3^3}{n_3} + {4^3}{n_4} + {5^3}{n_5} \\
&= 3k-1 + + 54 +64+ {5^3}(k - 1) \\
&= ({5^3} + 5 - 2)k - 8 \\
 & = ({\Delta ^3} + \Delta  - 2)k - 8 \\
&= ({\Delta ^2} + \Delta  + 2)(\Delta  - 1)k - 8 \\
 &= ({\Delta ^2} + \Delta  + 2)(n - 1) - 8 \\
& = ({\Delta ^2} + \Delta  + 2)n - ({\Delta ^2} + \Delta  + 10) \\
& = ({\Delta ^2} + \Delta  + 2)n - ({\Delta ^2} + \Delta  - 6) -16 \\
&< ({\Delta ^2} + \Delta  + 2)n - ({\Delta ^2} + \Delta  - 6).
\end{align*}
Also, if let $ \bigtriangleup=4 $. By Corollary \ref{cor1},
$$({n_1},{n_2},{n_3},{n_4}) = (2k - 2,2,2,k - 1)$$ which is the optimal solution. Thus
\begin{align*}
F(B) &\le {F_{\max }}({B_{n,\Delta }}) = {n_1} + {2^3}{n_2} + {3^3}{n_3} + {4^3}{n_4}\\
& = 2k - 2 + 16 +54 + {4^3}(k - 1) \\
 & = ({4^3} + 4 - 2)k + 4 \\
 & = ({\Delta ^3} + \Delta  - 2)(n - 1) + 4 \\
 & = ({\Delta ^2} + \Delta  + 2)n - ({\Delta ^2} + \Delta  - 2) \\
 &= ({\Delta ^2} + \Delta  + 2)n - ({\Delta ^2} + \Delta  - 6) - 4 \\
 & < ({\Delta ^2} + \Delta  + 2)n - ({\Delta ^2} + \Delta  - 6).
\end{align*}
\noindent {\bf Case 3.} $ 2 $ $ \leq r $ $ < $ $ \bigtriangleup-4 $\\
Since $ 2 $ $ \leq r $ $ < $ $ \bigtriangleup-4 $, thus $ \bigtriangleup> 6 $. In this case:\\
\begin{flushleft}

$$Hypothesis:\left\{ \begin{array}{l}
 n =(\Delta-1)k+1 \\
 2 \le r = \frac{{{n_2} + 2{n_3} +  \ldots  + (\Delta  - 2){n_{\Delta  - 1}} - 3}}{{\Delta  - 1}} < \Delta  - 4 \\
 {n_2} + 2{n_3} +  \ldots  + (\Delta  - 2){n_{\Delta  - 1}} = (\Delta  - 2)r + (r + 3) \\
 r + 1 \le \Delta  - 4 \\
 {n_\Delta } = k - r \\
 {n_{\Delta  - 1}} = r \\
 {n_{r + 4}} = 1 \\
 {n_1} = (\Delta  - 2)k \\
 \end{array} \right.$$
\end{flushleft}

and from Corollary \ref{cor1} we obtain
$$({n_1},{n_2}, \ldots ,{n_{r + 3}},{n_{r + 4}},{n_{r + 5}}, \ldots ,{n_{\Delta  - 2}},{n_{\Delta  - 1}},{n_\Delta }) = ((\Delta  - 2)k,0, \dots ,0,1,0, \ldots ,0,r,k - r)$$ which  which is the optimal optimization. Then
 \begin{align*}
 F(U) &\le {F_{\max }}({U_{n,\Delta }}) = {n_1} + {2^3}{n_2} +  \ldots  + {(\Delta  - 1)^3}{n_{\Delta  - 1}} + {\Delta ^3}{n_\Delta}\\
 & = (\Delta  - 2)k + {(r + 4)^3} + {(\Delta  - 1)^3}r + {\Delta ^3}(k - r) \\
& < ({\Delta ^3} + \Delta  - 2)k + {\bigtriangleup^3} + r( - 3{\Delta ^2} + 3\Delta  - 1) \\
 & = ({\Delta ^2} + \Delta  + 2)(n - 1) + {(r + 2)^3} + r( - 3{\Delta ^2} + 3\Delta  - 1) \\
 & < ({\Delta ^2} + \Delta  + 2)n - {\Delta ^2} - \Delta  - 2 + (\bigtriangleup-4)( - 3{\Delta ^2} + 3\Delta  - 1) \\
&= ({\Delta ^2} + \Delta  + 2)n - 2{\Delta ^3} +14 \Delta^{2}  - 14\bigtriangleup + 4 \\
 & = ({\Delta ^2} + \Delta  + 2)n - (2\bigtriangleup-16)(\bigtriangleup^{2}+\bigtriangleup-6) - 42\bigtriangleup+100\\
& < ({\Delta ^2} + \Delta  + 2)n - (\bigtriangleup^{2}+\bigtriangleup-6).
 \end{align*}
Since $\Delta >6$.\\
{\bf Case 4.} $ \bigtriangleup-4 \leq r k-4. $
Then, there are non-negative integers t, s such that $ r+3=t(\bigtriangleup-2)+s $, $ t\geq 1 $ and $ 0\leq s< \bigtriangleup-2 $.
By substituting in (\ref{4}), we have

$${n_2} + 2{n_3} +  \ldots  + (\Delta  - 2){n_{\Delta  - 1}} = (\Delta  - 2)(r + t) + s.$$

$$ Hypothesis:\left\{ \begin{array}{l}
 n = (\Delta  - 2)k -(t+ 1) \\
 \Delta  - 4 \le r = \frac{{{n_2} + 2{n_3} +  \ldots  + (\Delta  - 2){n_{\Delta  - 1}} - 3}}{{\Delta  - 1}} \leq k - 1 \\
 {n_2} + 2{n_3} +  \ldots  + (\Delta  - 2){n_{\Delta  - 1}} = (\Delta  - 2)(r + t) + s \\
 s + 1 \le \Delta  - 2 \\
 {n_\Delta } = k - r \\
 {n_{\Delta  - 1}} = r + t \\
 {n_{s + 1}} = 1 \\
 {n_1} = (\Delta  - 2)k - (t+1) \\
 \end{array} \right.$$

First let $ 0< s $. From Corollary \ref{cor1}, we have
$$({n_1},{n_2}, \ldots ,{n_{s}},{n_{s+1}},{n_{s + 2}}, \ldots ,{n_{\Delta  - 2}},{n_{\Delta  - 1}},{n_\Delta }) = ((\Delta  - 2)k-(t+1),0, \ldots ,0,1,0, \ldots ,0,0,r+t,k - r)$$
which is the optimal solution. Thus
 \begin{align*}
 F(B) &\le {F_{\max }}({B_{n,\Delta }}) = {n_1} + {2^3}{n_2} + \dots + {(\Delta  - 1)^3}{n_{\Delta  - 1}} + {\Delta ^3}{n_\Delta}\\
 &= (\Delta  - 2)k - (t+1) + {(s + 1)^3} + {(\Delta  - 1)^3}(t + r) + {\Delta ^3}(k - r) \\
& = ({\Delta ^3} + \Delta  - 2)k - 1 + {(s + 1)^3} -t({\Delta^3} + 3{\Delta ^2}-3\Delta +2) - r(3{\Delta ^2}-3\Delta +1) \\
& < ({\Delta ^3} + \Delta  - 2)k - 1 + {(\bigtriangleup-1)^3} -1({\Delta^3} + 3{\Delta ^2}-3\Delta +2) + (4-\bigtriangleup)(3{\Delta ^2}-3\Delta +1) \\
&= ({\Delta ^2} + \Delta  + 2)n - \bigtriangleup^{3}+8\bigtriangleup^{2}-8\bigtriangleup-2\\
& = ({\Delta ^2} + \Delta  + 2)n  +(\bigtriangleup-9)(-\bigtriangleup^{2} -\bigtriangleup+6) - 23\bigtriangleup +52\\
&< ({\Delta ^2} + \Delta  + 2)n  + (\bigtriangleup^{2}+\bigtriangleup -6 ).
 \end{align*}
Now let $ s=0 $, then the optimal solution is
 $$({n_1},{n_2}, \ldots ,{n_{\Delta  - 2}},{n_{\Delta  - 1}},{n_\Delta }) = ((\Delta  - 2)k-t+1,0, \ldots ,0,r+t,k - r).$$ where we have that
 $$Hypothesis:\left\{ \begin{array}{l}
 n = (\Delta  - 1)k + 1 \\
 \Delta  - 4 \le r = \frac{{{n_2} + 2{n_3} +  \ldots  + (\Delta  - 2){n_{\Delta  - 1}} - 3}}{{\Delta  - 1}} < k- 1 \\
 {n_2} + 2{n_3} +  \ldots  + (\Delta  - 2){n_{\Delta  - 1}} = (\Delta  - 2)(r + t)  \\
 {n_\Delta } = k - r \\
 {n_{\Delta  - 1}} = r + t \\
 {n_2} = {n_3} =  \ldots  = {n_{\Delta  - 2}} = 0 \\
 {n_1} = (\Delta  - 2)k - t + 1. \\
 \end{array} \right. $$
Therefore
\begin{align*}
 F(B) &\le {F_{\max }}({B_{n,\Delta }}) = {n_1} + {2^3}{n_2} + \ldots + {(\Delta  - 1)^3}{n_{\Delta  - 1}} + {\Delta ^3}{n_\Delta } \\
 & = (\Delta  - 2)k - t+1 + {(\Delta  - 1)^3}(t + r) + {\Delta ^3}(k - r) \\
& = ({\Delta ^3} + \Delta  - 2)k -t({\Delta^3} + 3{\Delta ^2}-3\Delta +2) +1 - r(3{\Delta ^2}-3\Delta +1) \\
& < ({\Delta ^3} + \Delta  - 2)k -1({\Delta^3} + 3{\Delta ^2}-3\Delta +2) +1+ (4-\bigtriangleup)(3{\Delta ^2}-3\Delta +1) \\
&= ({\Delta ^2} + \Delta  + 2)n - 2\bigtriangleup^{3}+11\bigtriangleup^{2}-11\bigtriangleup+1\\
& =({\Delta ^2}+\Delta+ 2)n  +(2\bigtriangleup-13)(-\bigtriangleup^{2} -\bigtriangleup+6) + 36\bigtriangleup +79\\
&<({\Delta ^2}+\Delta  + 2)n  - (\bigtriangleup^{2}+\bigtriangleup -6 ).
 \end{align*}
\end{proof}

\begin{theorem}
{\em Let $B$ be a bicyclic graph of order $n$ and maximum degree $ \bigtriangleup $ with $ n\equiv p $ $ mod( \bigtriangleup-1 ) $ where $ 2 \leq $ $ p < $$ \bigtriangleup-3 $. Then
 $$ F(B) \leq (\bigtriangleup^{2}+\bigtriangleup+2)-p(\bigtriangleup^{2}+\bigtriangleup+2)+p^{3}+9p^{2}+28p+26. $$
}
\end{theorem}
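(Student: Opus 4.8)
The plan is to follow the template of the two preceding theorems, reading the right-hand side of the bound as $(\Delta^{2}+\Delta+2)n-p(\Delta^{2}+\Delta+2)+p^{3}+9p^{2}+28p+26$. First I would write $n=(\Delta-1)k+p$ and, from the equality case of (\ref{4}) (that is, $n_{2}+2n_{3}+\cdots+(\Delta-1)n_{\Delta}=n+2$), solve for the number of maximum-degree vertices,
$$n_{\Delta}=k-r,\qquad r=\frac{n_{2}+2n_{3}+\cdots+(\Delta-2)n_{\Delta-1}-(p+2)}{\Delta-1}.$$
Since $p<\Delta-3$ gives $p+2<\Delta-1$, the value $r=-1$ is immediately excluded (it would force $n_{2}+2n_{3}+\cdots+(\Delta-2)n_{\Delta-1}=p+2-(\Delta-1)<0$), so $0\le r\le k-1$ and $1\le n_{\Delta}\le k$. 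The argument then splits according to the value of $r$.

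The case $r=0$ is the one that yields equality. Here $n_{2}+2n_{3}+\cdots+(\Delta-2)n_{\Delta-1}=p+2$, and since $2\le p+2\le\Delta-2$ this entire amount can be concentrated in one vertex of degree $p+3$; by Corollary~\ref{cor1} the $F$-maximizing major sequence is $(n_{1},\ldots,n_{p+3},\ldots,n_{\Delta})=((\Delta-2)k+p-1,0,\ldots,0,1,0,\ldots,0,k)$, where $n_{1}$ is obtained from (\ref{1}). Substituting into $F(B)=\sum_{v}d_{v}^{3}=n_{1}+2^{3}n_{2}+\cdots+\Delta^{3}n_{\Delta}$ gives
\begin{align*}
F(B)&\le(\Delta-2)k+p-1+(p+3)^{3}+\Delta^{3}k\\
&=(\Delta^{3}+\Delta-2)k+p^{3}+9p^{2}+28p+26\\
&=(\Delta^{2}+\Delta+2)(\Delta-1)k+p^{3}+9p^{2}+28p+26\\
&=(\Delta^{2}+\Delta+2)(n-p)+p^{3}+9p^{2}+28p+26,
\end{align*}
which is precisely the asserted bound.

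For $r\ge1$ I would prove the strict inequality. Set $S=n_{2}+2n_{3}+\cdots+(\Delta-2)n_{\Delta-1}=(\Delta-1)r+p+2$ and write $S=(\Delta-2)q+s$ with $0\le s<\Delta-2$. By Corollary~\ref{cor1} the optimal major sequence puts $q$ vertices at degree $\Delta-1$ and one further vertex at degree $s+1$ to absorb the remainder (the degenerate case $s=0$ being handled separately), all other vertices having degree $1$, with $n_{1}$ again read off from (\ref{1}); one then computes $F_{\max}$ exactly as in Cases~2--4 of the two previous proofs. After expansion $F_{\max}$ equals $(\Delta^{2}+\Delta+2)n$ plus a quantity that is cubic in the bounded parameter $s$ (or $r$) but carries a dominant negative term of order $-3\Delta^{2}r$ (respectively $-\Delta^{3}q$); using $r\ge1$ together with $\Delta>p+3\ge5$ one checks this quantity is strictly smaller than $-p(\Delta^{2}+\Delta+2)+p^{3}+9p^{2}+28p+26$, which delivers $F(B)<(\Delta^{2}+\Delta+2)n-p(\Delta^{2}+\Delta+2)+p^{3}+9p^{2}+28p+26$.

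The hard part is the bookkeeping in the $r\ge1$ case: one has to identify the extremal degree sequence in each sub-case---the generic $s$, the degenerate $s=0$, and the boundary situations where $p+3$ or $p+4$ collides with $\Delta-1$ or with $\Delta$, or where $\Delta$ lies just above the threshold $p+3$---and then grind through the resulting polynomial inequalities in $\Delta$, $r$ and $p$. As in the earlier theorems, each such inequality ultimately rests on the fact that the $-3\Delta^{2}r$-type term dominates the bounded positive cubic terms as soon as $\Delta$ exceeds the explicit threshold forced by $p<\Delta-3$; no step is conceptually deep, but this is exactly where constant-chasing errors creep in.
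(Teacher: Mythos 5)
Your setup is exactly the paper's: you restore the missing factor of $n$ in the stated bound, define $r$ so that $n_{\Delta}=k-r$, rule out $r=-1$ from $p+2<\Delta-1$, and your Case $r=0$ reproduces the paper's Case~1 verbatim, including the correct extremal sequence $((\Delta-2)k+p-1,0,\ldots,0,1,0,\ldots,0,k)$ and the exact arithmetic $(\Delta-2)k+p-1+(p+3)^{3}+\Delta^{3}k=(\Delta^{2}+\Delta+2)(n-p)+p^{3}+9p^{2}+28p+26$. So the approach and the equality case are right.

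The gap is that for $r\ge 1$ you have written a plan rather than a proof. The paper's Cases~2--4 occupy most of its argument precisely because the ``bookkeeping'' you defer is not uniform: for $r=1$ the leftover degree $p+r+3=p+4$ can collide with $\Delta-1$ or $\Delta$, forcing three distinct extremal sequences (the paper's Subcases~2.1--2.3, where the extra weight is absorbed as two vertices of degree $\Delta-1$ plus possibly one of degree $2$, rather than as a single vertex of degree $p+4$); for $\Delta-p-1\le r\le k-1$ one must introduce $p+r=t(\Delta-2)+s$ and treat $s=0$ and $s>0$ separately; and each branch ends in a different polynomial inequality in $\Delta,p,r,t,s$ (e.g.\ in Subcase~2.3 one needs $(p+3)(p+4)<\Delta(\Delta-1)$, which follows from $p+3\le\Delta-3$ but not from a generic ``$-3\Delta^{2}r$ dominates'' argument). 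Your sketch asserts that ``one checks'' each of these, but those checks are the entire content of the theorem beyond the $r=0$ case, and they are exactly where sign and constant errors arise (the paper itself slips in several of them). As it stands, the proposal establishes the bound only when $r=0$ and leaves the strict inequality for $r\ge1$ unverified.
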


\begin{proof}
Let $n=(\Delta-1)k+p$. Suppose that
$$r=\frac{n_2+2n_3+\ldots+(\Delta-2)n_{\Delta-1}-p-2}{\Delta-1}.$$
By equality in (\ref{4}), we have
$$n_{\Delta}=k-(\frac{n_2+2n_3+\ldots+(\Delta-2)n_{\Delta-1}-p-2}{\Delta-1})=k-r.$$
Then clearly $-1\leq r \leq k-1$ and $1\leq n_{\Delta}\leq k$. We consider the following cases.
\\
\textbf{Case 0.} $ r=-1 $.
\\Then clearly $ n_{\bigtriangleup}=k+1 $. It follow that
$${n_2} + 2{n_3} +  \ldots  + (\Delta  - 2){n_{\Delta  - 1}} + (\Delta  - 1)(k + 1) = (\Delta  - 1)k+p+2$$ and so
$${n_2} + 2{n_3} +  \ldots  + (\Delta  - 2){n_{\Delta  - 1}}  = -(\Delta  -1) + (p+2)$$ that it is not possible. So $ r=0 $, $ \bigtriangleup-1\geq p+2 $ implies that $ \bigtriangleup \geq p+3. $ We consider $ 2 \leq p \leq \bigtriangleup-3. $
\\
\noindent {\bf Case 1.} $ r=0 $.\\
Then $n_{\Delta}=k$ and we by (4) we have
$$n_2+2n_3+\cdots+
(\Delta-2)n_{\Delta-1}=(\Delta-1)k+p+2-(\bigtriangleup-1)k=p+2.$$
it follows from
Corollary (6) that
$$(n_1,n_2,\ldots,n_{p+3},\ldots,n_{\Delta-1},n_{\Delta})=((\bigtriangleup-2)k+p-1,0,\ldots,0,1,0,\ldots,0,k)$$
which is the optimal solution and so
\begin{align*}
F(B)& \le {F_{\max }}({B_{n,\Delta }}) = {n_1} + {2^3}{n_2} +  \ldots  + {\Delta ^3}{n_\Delta } \\
&= (\Delta  - 2)k + p - 1 + {(p + 3)^3} + {\Delta ^3}(k) \\
 & = ({\Delta ^3} + \Delta  - 2)k + {p^3} + 9{p^2} + 28p+26 \\
 & = ({\Delta ^2} + \Delta  + 2)n - p({\Delta ^2} + \Delta  + 2) + {p^3} + 9{p^2} + 28p+26.
\end{align*}
\noindent {\bf Case 2.}\quad $r=1$.\\
Then $n_{\Delta}=k-1$ and by (\ref{4}) we have
$$ n_{2}+2n_{3}+ \ldots+(\bigtriangleup-2)n_{\bigtriangleup-1}=(\bigtriangleup-1)+p+2=(\bigtriangleup-2)+(p+3).$$
Since $5\leq p +3<\bigtriangleup $, we consider three subcases:\\
{\bf Subcase 2.1}  $p+3=\bigtriangleup-1 $.\\
 Then
$$n_{2}+2n_{3}+ \ldots+(\bigtriangleup-2)n_{\bigtriangleup-1}=2(\bigtriangleup-2)+1.$$ Therefore
$$(n_1,n_2,\ldots,n_{\bigtriangleup-2},n_{\bigtriangleup-1},n_{\Delta})=((\Delta-2)k+p-2,1,0,\ldots,0,2,k-1)$$
which is the optimal solution and $ p=\bigtriangleup-2 $ we have
\begin{align*}
F(B) &\le {F_{\max }}({B_{n,\Delta }}) = {n_1} + {2^3}{n_2} +  \ldots  + {\Delta ^3}{n_\Delta } \\
&= (\Delta  - 2)k + p - 2 + 8 + 2{(\Delta  - 1)^3} + {\Delta ^3}(k - 1) \\
 &= ({\Delta ^3} + \Delta  - 2)k + p + {\Delta ^3} - 6{\Delta ^2} + 6\Delta  +4 \\
 & = ({\Delta ^2} + \Delta  + 2)(n-p) + p({\Delta ^2} -4 \Delta  - 2) + p + {\Delta ^3} - 6{\Delta ^2} + 6\Delta  + 4 \\
 & = ({\Delta ^2} + \Delta  + 2)(n-p) + p({\Delta ^2} -4 \Delta  - 2) + p + (\Delta  - 2)({\Delta ^2} - 4\Delta  - 2) \\
& = ({\Delta ^2} + \Delta  + 2)(n-p) + p({\Delta ^2} -4 \Delta  - 2) + p + (p+2)({\Delta ^2} - 4\Delta  - 2) \\
 & = ({\Delta ^2} + \Delta  + 2)(n-p) + p({\Delta ^2} -4 \Delta  - 2)  +2 {\Delta ^2} - 8\Delta  -4) \\
 & = ({\Delta ^2} + \Delta  + 2)n- p( 5\bigtriangleup +3)+2 {\Delta ^2} - 8\Delta  -4\\
 & < ({\Delta ^2} + \Delta  + 2)n - p({\Delta ^2} + \Delta  + 2) + {p^3} + 9{p^2} + 28p+26,
\end{align*}
 since $ - p( 5\bigtriangleup +3)+2 {\Delta ^2} - 8\Delta  -4 <  {p^3} + 9{p^2} + 28p+26. $\\

{\bf Subcase 2.2}  $ p+3=\bigtriangleup-2 $.
\\Then $n_{2}+2n_{3}+\ldots+(\bigtriangleup-2)n_{\bigtriangleup-1}=2(\bigtriangleup-2).$ Therefore
$$(n_1,n_2,\ldots,n_{\bigtriangleup-2},n_{\bigtriangleup-1},n_{\Delta})=((\Delta-2)k+p-1,0,\ldots,0,2,k-1)$$
which is the optimal solution and $ p=\bigtriangleup-3 $ we have
\begin{align*}
 F(B) &\le {F_{\max }}({B_{n,\Delta }}) = {n_1} + {2^3}{n_2} +  \ldots  + {\Delta ^3}{n_\Delta } \\
& = (\Delta  - 1)k + p - 1 + 2{(\Delta  - 1)^3} + {\Delta ^3}(k - 1) \\
  & = ({\Delta ^3} + \Delta  - 1)k + p+\bigtriangleup^{3}-6\bigtriangleup^{2}+6\bigtriangleup-3\\
 & = ({\Delta ^3} + \Delta  - 1)k + p +(\bigtriangleup-2)( {\Delta ^2} - 4\Delta  -2)-7 \\
& = ({\Delta ^3} + \Delta  - 1)k + p + (p+3)({\Delta ^2} - 4\Delta  -2)-7 \\
 & = ({\Delta ^2} + \Delta  + 2)n - p(5\bigtriangleup+3)+3\bigtriangleup^{2}-12\bigtriangleup-13 \\
& < ({\Delta ^2} + \Delta  + 2)n - p({\Delta ^2} + \Delta  + 2) + {p^3} + 9{p^2} + 28p+26.
\end{align*}
since $$- p(5\bigtriangleup+3)+3\bigtriangleup^{2}-12\bigtriangleup-13 < - p({\Delta ^2} + \Delta  + 2) + {p^3} + 9{p^2} + 28p+26.$$

{\bf Subcase 2.3 }  $p+3 \leq \bigtriangleup-3$.
\\Then
$ n_{2}+2n_{3}+\ldots+n_{\bigtriangleup-1}=(\bigtriangleup-2)+(p+3). $ Therefore
$$(n_1,n_2,\ldots,n_{p+4},\cdots,n_{\bigtriangleup-1},n_{\Delta})=((\Delta-2)k+p-1,0,\ldots,0,1,\cdots,0,1,k-1)$$
which is the optimal solution and $ p\leq\bigtriangleup-4 $ and $ \bigtriangleup\geq 5 $, then we have
\begin{align*}
 F(B)& \le {F_{\max }}({B_{n,\Delta }}) = {n_1} + {2^3}{n_2} +  \ldots  + {\Delta ^3}{n_\Delta } \\
& = (\Delta  - 2)k + p - 1 + {(p + 4)^3}+\Delta^3 -3\bigtriangleup^{2}+3\bigtriangleup-1+\Delta^3(k-1) \\
& = (\bigtriangleup^{3}+\Delta  - 2)k+p+(p+4)^{3}-3\bigtriangleup^{2}+3\bigtriangleup-2\\
&= ({\Delta ^2} + \Delta  + 2)n - p({\Delta ^2} + \Delta  + 2) + {p^3} + 12{p^2} + 49p  - 3{\Delta ^2} + 3\Delta+62  \\
 & < ({\Delta ^2} + \Delta  + 2)n - p({\Delta ^2} + \Delta  + 2) + {p^3} + 9{p^2} + 28p+26,
\end{align*}
 since for $\Delta\ge3$, ${p^3} + 12{p^2} + 49p  - 3{\Delta ^2} + 3\Delta+62 < {p^3} + 9{p^2} + 28p+26 $.

\noindent {\bf Case 3.}\quad $2\le r <\Delta-p-1$.\\
By (\ref{4}), we have
$n_2+2n_3+\ldots+(\Delta-2)n_{\Delta-1}=(\Delta-2)r+(p+r+2)$. Since
$r<\Delta-p-1$, it follows from Corollary \ref{cor1} that
$$(n_1,n_2,\ldots,n_{p+r+3},\ldots,n_{\Delta-2},n_{\Delta-1},n_{\Delta})=((\Delta-2)k+p-1,0,\ldots,0,1,0,\ldots,0,r,k-r)$$
which is the optimal solution. On the other hand, we deduce from $ p \leq
\Delta-3$ and $ r <\Delta-p-1$ and $ \bigtriangleup\geq 6 $.
 Thus
 \begin{align*}
 F(B) &\le {n_1} + {2^3}{n_2} +  \ldots  + {(\Delta  - 1)^3}{n_{\Delta  - 1}} + {\Delta ^3}{n_\Delta } \\
  & = (\Delta  - 2)k + p - 1 + {(p + r + 3)^3} + {(\Delta  - 1)^3}r + {\Delta ^3}(k - r) \\
&= ({\Delta ^3} + \Delta  - 2)k + p - 1 + ({p^3} + {(r + 3)^3} + 3{p^2}(r + 3) + 3p{(r + 3)^2}) - 3{\Delta ^2}r + 3\Delta r - r \\
 & = ({\Delta ^3} + \Delta  - 2)k + {p^3} + 9{p^2} + 28p - 1 + {(r + 3)^3} + 3pr(p + r + 2) - 3{\Delta ^2}r + 3\Delta r - r \\
&<({\Delta ^3} + \Delta  - 2)k + {p^3} + 9{p^2} + 28p + 26 + {r^3} + 9{r^2} + 27r + 3pr(\Delta  + 1) - 3{\Delta ^2}r + 3\Delta r - r \\
 & = ({\Delta ^3} + \Delta  - 2)k + {p^3} + 9{p^2} + 28p + 26 + r(r(r + 9) + 27 + 3p(\Delta  + 1) - 3{\Delta ^2} + 3\Delta  - 1) \\
 &<({\Delta ^3} + \Delta  - 2)k + {p^3} + 9{p^2} + 28p + 26 + r((\Delta  - p - 1)(\Delta  - p + 8) + 26 + 3p(\Delta  + 1) \\&- 3{\Delta ^2} + 3\Delta) \\
 & = ({\Delta ^3} + \Delta  - 2)k + {p^3} + 9{p^2} + 28p + 26 + r( - 2{\Delta ^2} + 10\Delta  + p\Delta  + p(p - 4) + 18) \\
 &<({\Delta ^3} + \Delta  - 2)k + {p^3} + 9{p^2} + 28p + 26 + r( - 2{\Delta ^2} + 10\Delta  + \Delta (\Delta  - 3) \\&+ (\Delta  - 3)(\Delta  - 7) + 18) \\
 & = ({\Delta ^2} + \Delta  + 2)(n - p) + {p^3} + 9{p^2} + 28p + 26 + r( - 3\Delta  + 39) \\
&<({\Delta ^2} + \Delta  + 2)(n - p) + {p^3} + 9{p^2} + 28p + 26.
 \end{align*}

\noindent {\bf Case 4.}\quad $\Delta-p-1 \le r \le k-1.$\\
Let $p+r=t(\Delta-2)+s.$ By substituting in (\ref{4}), we have
$$n_2+2n_3+\ldots+(\Delta-2)n_{\Delta-1}=(\Delta-2)(r+t)+s.$$ If $s=0$
then by Corollary \ref{cor1},
$$(n_1,n_2,\ldots,n_{\Delta-2},n_{\Delta-1},n_{\Delta})=((\Delta-2)k+p-(t+1),0,1,0,\ldots,0,r+t,k-r)$$
which is the optimal solution. Since $\Delta-p \le r+1$ and $p <\Delta-3$,
 and clearly
$$\bigtriangleup^{3}-9\bigtriangleup^{2}+10\bigtriangleup-6 < {p^3} + 9{p^2} + 26p+1< {\Delta ^3} -\Delta -23.$$
Thus
\begin{align*}
F(B) &\le {n_1} + {2^3}{n_2} +  \ldots  + {(\Delta  - 1)^3}{n_{\Delta  - 1}} + {\Delta ^3}{n_\Delta } \\
& = (\Delta  - 2)k + p - (t + 1) + {(3)^3} + {(\Delta  - 1)^3}(t+r) + {\Delta ^3}(k - r) \\
& = ({\Delta ^3} + \Delta  - 2)k + p + 26 + {\Delta ^3}t - 3{\Delta ^2}t + 3\Delta t - 2t - 3{\Delta ^2}r + 3\Delta r - r \\
 & = ({\Delta ^3} + \Delta  - 2)k + p + 26 - t({\Delta ^3} + 3{\Delta ^2} - 3\Delta  + 2) - r(3{\Delta ^2} - 3\Delta  + 1) \\
 &<({\Delta ^3} + \Delta  - 2)k + p + 26 - 1({\Delta ^3} + 3{\Delta ^2} - 3\Delta  + 2) + (p - \Delta  + 1)(3{\Delta ^2} - 3\Delta  + 1) \\
 & = ({\Delta ^3} + \Delta  - 2)k + p + 26 + p(3{\Delta ^2} - 3\Delta  + 1) + 2{\Delta ^3} - 3{\Delta ^2} - \Delta  + 25) \\
 & = ({\Delta ^2} + \Delta  + 2)n - p( - 2{\Delta ^2} + 4\Delta  - 2) - 2{\Delta ^3} + 3{\Delta ^2} - \Delta  + 25 \\
 & < ({\Delta ^2} + \Delta  + 2)n - p({\Delta ^2} + \Delta  + 2) + {p^3} + 9{p^2} + 28p + 26.
\end{align*}
Now let $0<s$. Since $s< \Delta-2$, it follows from Corollary \ref{cor1} that
$$(n_1,n_2,\ldots,n_{s},n_{s+1},n_{s+2},\ldots,n_{\Delta-2},n_{\Delta-1},n_{\Delta})=((\Delta-2)k+p-(t+1),0,\ldots,0,1,0,\ldots,0,0,r+t,k-r)$$
which is the optimal solution. Since $2\leq p<\Delta-3$ and $0<s \leq
\Delta-3$ and clearly $ - p( - 2{\Delta ^2} + 4\Delta  - 2) - {\Delta ^3} + 3{\Delta ^2} - \Delta  - 2 < 2\bigtriangleup^{3}-15\bigtriangleup^{2}+10\bigtriangleup-6 < {p^3} + 9{p^2} + 28p + 26. $

Thus
 \begin{align*}
 F(B) &\le {n_1} + {2^3}{n_2} +  \ldots  + {(\Delta  - 1)^3}{n_{\Delta  - 1}} + {\Delta ^3}{n_\Delta } \\
 & = (\Delta  - 2)k + p - (t + 1) + {(s + 3)^3} + {(\Delta  - 1)^3}(t+r) + {\Delta ^3}(k - r) \\
 & = ({\Delta ^3} + \Delta  - 2)k + p - 1 + {(s + 3)^3} - t( - {\Delta ^3} + 3{\Delta ^2} - 3\Delta  + 2) - r(3{\Delta ^2} - 3\Delta  + 1) \\
 &<({\Delta ^3} + \Delta  - 2)k + p - 1 + {\Delta ^3} - 1( - {\Delta ^3} + 3{\Delta ^2} - 3\Delta  + 2) + (p + 1 - \Delta )(3{\Delta ^2} - 3\Delta  + 1) \\
& = ({\Delta ^2} + \Delta  + 2)n - p( - {\Delta ^2} + \Delta  + 2) + p + p(3{\Delta ^2} + 3\Delta  + 1) - {\Delta ^3} + 3{\Delta ^2} - \Delta  - 2 \\
&= ({\Delta ^2} + \Delta  + 2)n - p( - 2{\Delta ^2} + 4\Delta  - 2) - {\Delta ^3} + 3{\Delta ^2} - \Delta  - 2 \\
 &<({\Delta ^2} + \Delta  + 2)n - p({\Delta ^2} + \Delta  + 2) + {p^3} + 9{p^2} + 28p + 26.
 \end{align*}
This completes the proof.
\end{proof}


\vspace{1cm}
{\bf Funding Information:}
J. Rodr\'{\i}guez was supported by MINEDUC-UA project, code ANT-1899 and Funded by the Initiation Program in Research - Universidad de Antofagasta, INI-19-06.\\

\end{document}